\providecommand{\keywords}[1]
{
	\small	
	\textbf{Keywords:} #1
}
\title{A counterexample to Fuglede's conjecture in $(\mathbb{Z}/p\mathbb{Z})^4$ \\ for all odd primes}
\author{Sam Mattheus\footnote{sam.mattheus@vub.ac.be. Department of Mathematics, Pleinlaan 2, 1050 Brussels, Belgium.}  \\
	\small Vrije Universiteit Brussel \\
}
\date{}
\newtheorem{theorem}{Theorem}[section]
\newtheorem{prop}[theorem]{Proposition}
\newtheorem{lemma}[theorem]{Lemma}
\newtheorem{conj}[theorem]{Conjecture}
\theoremstyle{definition}
\newtheorem{defin}[theorem]{Definition}
\def\Z{\mathbb Z}
\def\R{\mathbb R}
\def\Q{\mathbb Q}
\def\Fp{\Z/p\Z}
\def\C{\mathbb{C}}
\newcommand\ZpZ[1]{(\Z/p\Z)^{#1}}
\begin{document}

\maketitle

\begin{abstract}
	In this short note we construct a spectral, non-tiling set of size $2p$ in $\ZpZ{4}$, $p$ odd prime. This example complements a previous counterexample in [C. Aten et al., \textit{Tiling sets and spectral sets over finite fields}, arXiv:1509.01090], which existed only for $p \equiv 3 \pmod{4}$. On the contrary we show that the conjecture does hold in $(\Z/2\Z)^4$.
\end{abstract}

\keywords{Fuglede's conjecture, tiling set, spectral set, elementary abelian group}

\section{Origin and history of the conjecture}

In this paper we fill a gap in the investigation on Fuglede's conjecture for elementary abelian $p$-groups of rank four. The story of this conjecture begins in 1974, when Bent Fuglede \cite{Fuglede} investigates the relation between domains, i.e. sets of positive and finite Lebesgue measure in $\R^d$, that tile $\R^d$ by translation and the existence of an orthogonal exponential basis for the space of complex-valued functions on these domains. For future reference, a set satisfying this last property is called a spectral set and its spectrum is the corresponding orthogonal basis. He was able to show that both properties are equivalent, at least when the set is the fundamental domain of a lattice. On one of the last pages of this paper, he conjectures that the equivalence should in fact be true for any domain in $\R^d, d \geq 1$. Over the years, several authors managed to establish connections between the two properties, see for example \cite{IP,KL,L2002,LRW}, or were able to answer the question affirmatively for special kinds of domains. Notable results include those of Laba \cite{Laba}, who characterised in 2001  the union of two intervals satisfying the conjecture, and of Iosevich, Katz and Tao \cite{IKT} who proved in 2003 that the conjecture holds when the set is a convex planar domain.

In the light of these results, it is safe to say that history took an unexpected turn when in 2004 Terry Tao \cite{Tao} gave a counterexample in $\R^5$. To do so, he constructed a non-tiling spectral set, showing that at least one of the implications is false in $\R^5$, and by extension also in $\R^d, d \geq 5$. Interestingly, his approach first established a set in $(\Z/3\Z)^5$ of size 6, admitting a spectrum. As 6 does not divide $3^5$, this means that it cannot be tiling. After lifting this example to $\R^5$, while keeping the relevant properties intact, his result followed. His approach ignited the interest in the conjecture, not just in $\R^d$ anymore, but in the elementary abelian groups $(\Z/p\Z)^d$, $p$ prime, and it is this setting that we contribute to. We will properly state Fuglede's conjecture and its current status in $\ZpZ{d}$ in the next section, but wrap up the story for $\R^d$ first. 

Since Tao's result, Kolountzakis and Matolcsi \cite{KM} have shown that the reverse implication is also false for all $d \geq 5$. This was consequently improved to $d=4$ by Farkas and R\'{e}v\'{e}sz \cite{FR} and to $d=3$ by Farkas, Matolcsi and M\'ora \cite{FMM}. On the other hand, affirmative answers in small dimension still appear, for instance for the union of three intervals in $\R$ \cite{BKKM} and convex polytopes in $\R^3$ \cite{GL}, further exhibiting the disparity between small and large dimensions.

\section{Fuglede's conjecture in $\ZpZ{d}$}

We will now define what it means for a set $E \subseteq \ZpZ{d}$ to be tiling or to have a spectrum, in order to state the conjecture clearly. For the remainder of this paper, let $p$ denote a prime.

\subsection{Tiling sets}

\begin{defin}
	The set $E$ is \textbf{tiling} if there exists $T \subseteq \ZpZ{d}$ such that the sets $\{E+t \,\,|\,\, t \in T\}$ partition $\ZpZ{d}$. The set $T$ is called the \textbf{tiling partner}.
\end{defin}

Clearly, as $|E||T|=|\ZpZ{d}|=p^d$, it follows that $|E|=p^k, 0 \leq k \leq d$. The full space and singletons are trivially tiling sets.

Tiling sets can in fact be defined in any finite abelian group. This is an independent topic of interest and appears under the name of group factorizations. For the elementary abelian groups, one of the early exponents in this domain is the book by R\'edei \cite{Redeibook}. One of the highlights of this work is his proof that for $d=2$ a non-trivial tiling set is a subgroup or coset thereof, or its tiling partner is. This result can be leveraged into a proof of one implication of the Fuglede conjecture for $d=2$, as is also described in \cite{KDSV}. Moreover, he conjectured \cite[Problem 5]{Redeibook} that in three dimensions a similar situation occurs -- that at least one of the factors does not span the whole group -- although this problem remains open to this day. We refer to Sz\H onyi's paper \cite{Szonyi} for an introduction to R\'edei's result and its connections to geometry over finite fields. On the other hand, for factorizations of finite abelian groups in general, we refer to the book of Szab\'o \cite{Szabobook}.

Before we turn to the subject of the other half of the conjecture, we need to make a small detour.

\subsection{The space $L^2(F)$ and characters of finite abelian groups}

In order to state the definition of a spectral set, we need two components: the the vector space $L^2(F)$ of complex-valued functions on a finite set $F$, and characters of a finite abelian group. For the remainder of this section, fix $F$ to be a finite set.

\begin{defin}
	The vector space of all functions $F \rightarrow \mathbb{C}$ is denoted as $L^2(F)$, when equipped with the inner product defined by	
	\begin{align}\label{innerproductdef}\langle f,g \rangle_{L^2(F)} = \frac{1}{|F|}\sum_{x \in F}f(x)\overline{g(x)}. \end{align}
\end{defin}

Since the characteristic functions $\{\delta_x \,\, | \, \, x \in F\}$ are an orthogonal basis for $L^2(F)$, it is clear that $\dim(L^2(F))=|F|$.

A word of caution is necessary: there are several different conventions regarding the scaling of the sum, the motivation for the choice $1/|F|$ will be clear after we discuss some character theory.

\begin{defin}
	A \textbf{character} $\chi$ of a finite abelian group $G$ is a homomorphism $\chi:G\rightarrow \C$.
\end{defin}

There exists a rich literature on characters of finite abelian groups, so we will collect a few necessary results, focused on the elementary abelian groups, which are contained in any standard work in this area. In particular, we refer to \cite{Conrad} for an excellent introduction into this topic.

We will write elements $\textbf{x}=(x_1,\dots,x_d) \in \ZpZ{d}$ in bold, in contrast with elements of $\Z/p\Z$. The characters of $\ZpZ{d}$ are defined by 
\[\chi_\textbf{a}(\textbf{x}) = e^{\frac{2\pi i}{p}\langle \textbf{a},\textbf{x} \rangle},\]
for all $\textbf{a},\textbf{x} \in \ZpZ{d}$, where $\langle \textbf{a},\textbf{x} \rangle = a_1x_1 + \dots + a_dx_d$. Observe that $\overline{\chi_\textbf{a}}=(\chi_\textbf{a})^{-1}=\chi_\textbf{-a}$. 

\begin{lemma}[The orthogonality relations of characters]
	Let $\emph{\textbf{a}},\emph{\textbf{b}} \in \ZpZ{d}$ and  $\chi_\emph{\textbf{a}},\chi_\emph{\textbf{b}}$ their associated characters, then 
	\[\sum_{\emph{\textbf{x}} \, \in \ZpZ{d}}\chi_\emph{\textbf{a}}(\emph{\textbf{x}})\overline{\chi_\emph{\textbf{b}}(\emph{\textbf{x}})} = \begin{cases}
		p^d & \text{if } \emph{\textbf{a}}=\emph{\textbf{b}} \\
		0 & \text{if } \emph{\textbf{a}}\neq\emph{\textbf{b}}
		\end{cases}\]
\end{lemma}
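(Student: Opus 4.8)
The plan is to reduce the statement to the single fact that a nontrivial character of $\ZpZ{d}$ sums to zero over the whole group. First I would use the identity $\overline{\chi_\textbf{b}} = \chi_{-\textbf{b}}$ recorded just before the lemma, together with $\chi_\textbf{a}\chi_{-\textbf{b}} = \chi_{\textbf{a}-\textbf{b}}$ (which is immediate from $\langle \textbf{a},\textbf{x}\rangle - \langle \textbf{b},\textbf{x}\rangle = \langle \textbf{a}-\textbf{b},\textbf{x}\rangle$), to rewrite the left-hand side as $\sum_{\textbf{x} \in \ZpZ{d}} \chi_{\textbf{a}-\textbf{b}}(\textbf{x})$. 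Setting $\textbf{c} = \textbf{a}-\textbf{b}$, the claim becomes: this sum is $p^d$ if $\textbf{c}=\textbf{0}$ and $0$ otherwise.

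The case $\textbf{c}=\textbf{0}$ is immediate, since $\chi_\textbf{0}$ is identically $1$ and $|\ZpZ{d}| = p^d$. For $\textbf{c}\neq\textbf{0}$ I would give the standard translation argument: choose an index $j$ with $c_j\not\equiv 0 \pmod p$; since a nonzero residue is represented by an integer strictly between $0$ and $p$, for the $j$-th standard basis vector $\textbf{e}_j$ we have $\chi_\textbf{c}(\textbf{e}_j) = e^{2\pi i c_j/p}\neq 1$. Writing $S = \sum_{\textbf{x}}\chi_\textbf{c}(\textbf{x})$ and reindexing by the bijection $\textbf{x}\mapsto\textbf{x}+\textbf{e}_j$ of $\ZpZ{d}$, the homomorphism property of $\chi_\textbf{c}$ gives $S = \sum_{\textbf{x}}\chi_\textbf{c}(\textbf{x}+\textbf{e}_j) = \chi_\textbf{c}(\textbf{e}_j)\,S$, hence $(\chi_\textbf{c}(\textbf{e}_j)-1)S = 0$, and since the first factor is nonzero, $S=0$.

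Alternatively, and more in the coordinate-wise spirit of $\ZpZ{d}$, one can factor $\sum_{\textbf{x}}\chi_\textbf{c}(\textbf{x}) = \prod_{k=1}^{d}\bigl(\sum_{t\in\Z/p\Z} e^{2\pi i c_k t/p}\bigr)$, evaluate each inner sum as $p$ when $c_k\equiv 0$ and as a vanishing geometric series $\tfrac{e^{2\pi i c_k}-1}{e^{2\pi i c_k/p}-1}=0$ (using $e^{2\pi i c_k}=1$) when $c_k\not\equiv 0$, and note that $\textbf{c}\neq\textbf{0}$ forces at least one factor to vanish. There is no genuine obstacle here; the only point requiring care is the bookkeeping that guarantees a nonzero denominator, i.e. that one actually invokes $\textbf{c}\neq\textbf{0}$ to produce the index $j$. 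I would present the translation argument as the main proof, since it generalizes verbatim to any finite abelian group.
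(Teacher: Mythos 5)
Your argument is correct and complete: the reduction to $\sum_{\textbf{x}}\chi_{\textbf{a}-\textbf{b}}(\textbf{x})$ uses exactly the identities $\overline{\chi_\textbf{b}}=\chi_{-\textbf{b}}$ and $\chi_\textbf{a}\chi_\textbf{b}=\chi_{\textbf{a}+\textbf{b}}$ recorded in the text, and the translation trick $S=\chi_\textbf{c}(\textbf{e}_j)S$ with $\chi_\textbf{c}(\textbf{e}_j)\neq 1$ settles the nontrivial case. Note, however, that the paper does not prove this lemma at all --- it is quoted as a standard fact with a pointer to the literature (Conrad's notes) --- so there is no in-paper proof to compare against; what you wrote is precisely the classical argument one finds in those references, and your remark that the translation version works verbatim for any finite abelian group (whereas the coordinate-wise factorization into geometric series $\sum_{t\in\Fp}e^{2\pi i c_k t/p}$ is special to the product structure of $\ZpZ{d}$) is an accurate assessment of the trade-off between your two variants. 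Nothing further is needed.
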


This implies that the characters $\{\chi_\textbf{a} \,\, | \,\, \textbf{a} \in \ZpZ{d}\}$ are an orthonormal basis for $L^2(\ZpZ{d})$, motivating the choice of scalar $1/|F|$ in the inner product \eqref{innerproductdef}. Furthermore, we have a group operation on the characters as for any $\textbf{a},\textbf{b},\textbf{x} \in \ZpZ{d}$,
\[\chi_\textbf{a}(\textbf{x})\chi_\textbf{b}(\textbf{x}) = e^{\frac{2\pi i}{p}\langle \textbf{a},\textbf{x} \rangle}e^{\frac{2\pi i}{p}\langle \textbf{b},\textbf{x} \rangle} = e^{\frac{2\pi i}{p}\langle \textbf{a}+\textbf{b},\textbf{x} \rangle} =  \chi_{\textbf{a}+\textbf{b}}(\textbf{x}).\]

The existence of an orthonormal basis of characters is the defining property of spectral sets as we will now see. 

\subsection{Spectral sets}

\begin{defin}
	The set $E$ is \textbf{spectral} if there exists $A \subseteq \ZpZ{d}$ such that $\{\chi_\textbf{a} \,\, | \,\, \textbf{a} \in A\}$ is an orthogonal basis of $L^2(E)$. The set $A$ is called the spectrum.
\end{defin}

As $\dim(L^2(E))=|E|$, it is clear that $|A|=|E|$. Again, we see that the full space and the singletons satisfy this property trivially: the first due to what we saw above, the second because $\dim(L^2(\{\textbf{x}\}))=1$ and hence every non-zero complex number is an orthogonal basis for this space. 

We are now in the position to state Fuglede's conjecture in $\ZpZ{d}$. 

\begin{conj}
	Let $E \subseteq \ZpZ{d}$, then $E$ is a tiling set if and only if $E$ is a spectral set.
\end{conj}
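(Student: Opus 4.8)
The plan is to translate both properties into the language of the Fourier transform and to show that each reduces to a single combinatorial object, the zero set $Z(E) = \{\textbf{a}\in\ZpZ{d} : \widehat{1_E}(\textbf{a}) = 0\}$ of $\widehat{1_E}(\textbf{a}) = \sum_{\textbf{x}\in E}\chi_\textbf{a}(\textbf{x})$. On the spectral side, the orthogonality computation $\langle\chi_\textbf{a},\chi_\textbf{b}\rangle_{L^2(E)} = \frac{1}{|E|}\sum_{\textbf{x}\in E}\chi_{\textbf{a}-\textbf{b}}(\textbf{x}) = \frac{1}{|E|}\widehat{1_E}(\textbf{a}-\textbf{b})$ shows that a set $A$ of size $|E|$ is a spectrum exactly when its difference set avoids the nonzeros of $\widehat{1_E}$. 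On the tiling side, a partition by translates is the convolution identity $1_E * 1_T = 1_{\ZpZ{d}}$, whose Fourier transform reads $\widehat{1_E}(\textbf{a})\,\widehat{1_T}(\textbf{a}) = p^d\,\delta_{\textbf{a},\textbf{0}}$. Together with the size constraint $|E||T| = p^d$, the two properties become
\[ A \text{ a spectrum} \iff A - A \subseteq Z(E)\cup\{\textbf{0}\}, \qquad T \text{ a partner} \iff \mathrm{supp}(\widehat{1_T})\setminus\{\textbf{0}\} \subseteq Z(E), \]
so the whole conjecture is a statement about which structured configurations $Z(E)$ can accommodate.

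The engine I would use to understand $Z(E)$ is the arithmetic of vanishing sums of roots of unity. Each value $\widehat{1_E}(\textbf{a})$ is a sum of $|E|$ many $p$-th roots of unity, and since $p$ is prime the only $\Z$-linear relation among $1,\zeta,\dots,\zeta^{p-1}$ (with $\zeta = e^{2\pi i/p}$) is $1+\zeta+\dots+\zeta^{p-1} = 0$. Hence $\widehat{1_E}(\textbf{a})=0$ if and only if the multiset of exponents $\{\langle\textbf{a},\textbf{x}\rangle : \textbf{x}\in E\}$ is balanced modulo $p$; equivalently, $\textbf{a}\in Z(E)$ precisely when the linear form $\textbf{x}\mapsto\langle\textbf{a},\textbf{x}\rangle$ takes every value in $\Z/p\Z$ equally often on $E$. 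This recasts $Z(E)$ as the set of directions along which $E$ is equidistributed among parallel affine hyperplanes of $\AG(d,p)$, converting both abstract conditions into genuine geometric balance.

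With this dictionary I would prove the equivalence first in the structured regime and then bootstrap. If $E$ is a coset of a subgroup $H\leq\ZpZ{d}$, the annihilator $H^\perp = \{\textbf{a} : \langle\textbf{a},\textbf{h}\rangle = 0 \text{ for all } \textbf{h}\in H\}$ has size $|E|$ and satisfies $H^\perp - H^\perp = H^\perp \subseteq Z(E)\cup\{\textbf{0}\}$, so it is a spectrum, while a complementary subgroup furnishes a tiling partner; this settles both directions whenever $E$ or its partner is group-like, and it disposes of $d=1$ at once. For $d=2$ the reduction is complete: R\'edei's theorem, quoted in the introduction, forces a non-trivial tiling set or its partner to be a coset of a subgroup, and I would seek the analogous normal-form classification of the possible zero sets for $d=3$. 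The reverse implication, spectral $\Rightarrow$ tiling, follows throughout by the dual argument exchanging the roles of $A$ and $T$.

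The hard part is everything outside this structured regime, and it is here that I expect the argument to do its real work or to break. The spectrum condition constrains only the difference set $A-A$, whereas the tiling condition constrains the support of the single function $\widehat{1_T}$; there is no formal mechanism guaranteeing that a large clique $A$ in the Cayley graph on $Z(E)$ can be completed to a direct-sum factorization of $\ZpZ{d}$. Bounding this gap requires a dimension-specific understanding of how equidistribution directions can cluster, and precisely in rank four for odd $p$ these two demands separate — a spectrum of size $2p$ can be assembled while no tiling partner exists — which is the obstruction this note is built around.
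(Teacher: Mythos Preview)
The statement you were asked to prove is labelled a \emph{conjecture}, and the paper does not prove it; the whole point of the note is to \emph{disprove} it in $\ZpZ{4}$ for every odd prime $p$ by exhibiting a spectral set of size $2p$ that cannot tile. So there is no ``paper's own proof'' to compare against, and any purported proof of the full biconditional is necessarily wrong.

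Your write-up is not really a proof at all but an outline of the Fourier-analytic dictionary (zero set $Z(E)$, equidistribution along hyperplane pencils, R\'edei for $d=2$), followed by an honest admission in the final paragraph that the spectral and tiling constraints decouple in rank four and that a spectrum of size $2p$ with no tiling partner exists. That last sentence is the correct conclusion, but it is the negation of the statement you set out to establish. Everything before it --- the reduction to $A-A\subseteq Z(E)\cup\{\mathbf 0\}$ versus $\mathrm{supp}(\widehat{1_T})\setminus\{\mathbf 0\}\subseteq Z(E)$, the coset/annihilator case, the $d=1,2$ discussion --- is sound background and overlaps with the paper's exposition, but none of it bridges the gap you yourself identify: a large clique in the Cayley graph on $Z(E)$ need not extend to a factorisation of the group. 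The paper's contribution is precisely to build such a clique (the set $A$) for a set $E$ of size $2p\nmid p^4$, certifying that the gap is real rather than merely plausible.
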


We briefly summarize the current state of affairs.

\begin{itemize}
	\item[\underline{$d=1$}] Recall that in general, if $E$ tiles then $|E|$ divides $p^d$. Similarly, we will show later that if $E$ is a spectral set in $\Z/p\Z$, then $|E|=1$ or $p$. Therefore, either one of the properties implies that $E$ is a singleton or the full space, which shows that the conjecture is trivially true.
	\item[\underline{$d=2$}] First proved by Iosevich, Mayeli and Pakianathan \cite{IMP} to be true. Slightly alternative proofs have appeared in \cite{Researchpaper} and \cite{KDSV}.
	\item[\underline{$d=3$}] Half of the conjecture is known: `tiling implies spectral' is settled in the affirmative \cite{Researchpaper} while the other implication remains open. In the same paper, a computation by hand for $p=2,3$ shows that the full conjecture is true in these cases.
	\item[\underline{$d=4$}] A counterexample to `spectral implies tiling' for $p \equiv 3 \pmod{4}$ has been constructed in \cite{Researchpaper}, using the machinery of log-Hadamard matrices. Inspired by this construction, we extend this in the next section to all odd primes $p$. Before doing so, we show the perhaps surprising result that for $p=2$ the conjecture is true. However, this exception seems to be due to the small number of points and is not an indication for larger dimensions. These results were found independently and with different methods by Ferguson and Sothanaphan \cite{FS}.
	\item[\underline{$d\geq5$}] For all odd primes $p$ there exist non-tiling spectral sets \cite{Researchpaper}. When $p=2$, a counterexample for the same direction has been constructed for $d=10$ \cite{FS}. The converse is still wide open as is the case $p=2$ and $5 \leq d \leq 10$.
\end{itemize}

One can see that also in the case of elementary abelian groups, there is a discrepancy between small and large dimensions, analogous to the situation in $\R^d$. We give a short proof that the conjecture holds for dimension four when $p=2$, relying on \cite[Theorem 1.1]{Researchpaper}, of which we copy the relevant parts for convenience.

\begin{theorem}\label{mainthmresearchpaper}
	Let $E \subseteq \ZpZ{d}$, $p$ prime, then
	\begin{enumerate}
		\item If $E$ is a spectral set then $|E|\in\{1,p^d\}$ or $|E|=kp$ for some $1 \leq k \leq p^{d-2}$.
		\item If $E$ is a spectral set in $(\Z/2\Z)^d$, then $|E|\in\{1,2\}$ or is a multiple of $4$.
		\item If $|E| \in \{p,p^{d-1}\}$ then $E$ tiles if and only if $E$ is spectral.
		\item A set $E$ tiles with a subspace tiling partner $W$ if and only if $E$ is spectral with spectrum $W^\perp$. 
	\end{enumerate}
\end{theorem}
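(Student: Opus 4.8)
The plan is to reformulate both notions through the Fourier transform on $\ZpZ{d}$. Write $\widehat{f}(\textbf{a})=\sum_{\textbf{x}}f(\textbf{x})\chi_{\textbf{a}}(\textbf{x})$ and $Z(E)=\{\textbf{a}\in\ZpZ{d}:\widehat{1_E}(\textbf{a})=0\}$. By the orthogonality relations $\langle\chi_{\textbf{a}},\chi_{\textbf{b}}\rangle_{L^2(E)}=\tfrac{1}{|E|}\widehat{1_E}(\textbf{a}-\textbf{b})$, so $A$ is a spectrum of $E$ precisely when $|A|=|E|$ and $(A-A)\setminus\{\textbf{0}\}\subseteq Z(E)$ (orthogonality of $|E|$ many nonzero characters already makes them a basis of the $|E|$-dimensional space $L^2(E)$). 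Similarly $E$ tiles with $T$ precisely when $1_E*1_T\equiv 1$, i.e.\ $\widehat{1_E}\cdot\widehat{1_T}=p^d\delta_{\textbf{0}}$, which is the same as $|E||T|=p^d$ together with $Z(E)\cup Z(T)=\ZpZ{d}\setminus\{\textbf{0}\}$ (a $0/1$-valued convolution that is the constant function $1$ is an honest partition). Now (4) drops out: for a subspace $W$ one has $\widehat{1_W}=|W|\cdot 1_{W^\perp}$, so ``$W^\perp$ is a spectrum of $E$'' unwinds to $W^\perp\setminus\{\textbf{0}\}\subseteq Z(E)$ and $|E|=|W^\perp|$, while ``$E$ tiles with $W$'' unwinds (using $\widehat{1_W}=|W|\cdot 1_{W^\perp}$ and $|W^\perp|=p^d/|W|$) to exactly the same two conditions.

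For (1) I would first harvest the arithmetic of vanishing sums of roots of unity. If $|E|>1$ and $A$ is a spectrum, pick $\textbf{v}=\textbf{a}-\textbf{b}\neq\textbf{0}$ in $A-A$; sorting $E$ by the value of $\langle\textbf{v},\cdot\rangle$ gives $\sum_{j=0}^{p-1}n_j\omega^j=\widehat{1_E}(\textbf{v})=0$ with $\omega=e^{2\pi i/p}$ and integers $n_j\geq 0$, and $\Q$-linear independence of $1,\omega,\dots,\omega^{p-2}$ forces all $n_j$ equal, so $n_j=|E|/p$ and $p\mid|E|$; the same identity gives $\widehat{1_E}(k\textbf{v})=\tfrac{|E|}{p}\sum_j\omega^{kj}=0$ for all $k\neq 0$, whence $\langle\textbf{v}\rangle\setminus\{\textbf{0}\}\subseteq Z(E)$. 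The substantive part of (1) is the bound $|E|\leq p^{d-1}$ when $|E|\neq p^d$, for which I would induct on $d$. For $d=1$, balancedness along the one available nonzero direction, whose fibres are singletons, forces $|E|/p\leq 1$. For the step, normalise $\textbf{v}=\textbf{e}_1$ and slice $E=\bigsqcup_j E_j$ along the first coordinate, so every $|E_j|=|E|/p$; if each slice $E_j$, read in $\ZpZ{d-1}$, is again spectral, the inductive hypothesis leaves each $|E_j|\leq p^{d-2}$ or $|E_j|=p^{d-1}$, the latter forcing $E=\ZpZ{d}$ and otherwise giving $|E|=p|E_j|\leq p^{d-1}$.

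Parts (2) and (3) run on the same fuel. For (2), with $p=2$ and $|E|>2$ one has $|A|\geq 3$, so $A\setminus\{\textbf{0}\}$ is not inside one $1$-dimensional subspace and $A$ contains two $\Z/2\Z$-independent nonzero vectors; their sum lies in $A-A$ too, so some $2$-dimensional subspace $V$ has $V\setminus\{\textbf{0}\}\subseteq Z(E)$. Partition $E$ into four classes by the induced map $E\to V^{\ast}\cong(\Z/2\Z)^2$: the three equations $\widehat{1_E}(\textbf{w})=0$, $\textbf{w}\in V\setminus\{\textbf{0}\}$, are linear in the four class sizes with coefficient matrix an order-$4$ Hadamard matrix with its all-ones row deleted, whose kernel is spanned by the all-ones vector, so all four sizes agree and $4\mid|E|$. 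For (3) with $|E|=p$: a spectral $E$ has some $\textbf{v}\neq\textbf{0}$ in $Z(E)$, and balancedness makes $\langle\textbf{v},\cdot\rangle$ a bijection $E\to\Z/p\Z$, i.e.\ $E$ is a transversal of the hyperplane $W=\textbf{v}^{\perp}$, so $\{W+\textbf{x}:\textbf{x}\in E\}$ lists the cosets of $W$ and $E$ tiles with $W$; conversely a tiling $E$ forces $\widehat{1_E}$ to vanish somewhere off $\textbf{0}$ (else $\widehat{1_T}$ vanishes off $\textbf{0}$, making $1_T\equiv|T|/p^d$, not $0/1$-valued), and the same transversal argument finishes it. For $|E|=p^{d-1}$, ``tiling $\Rightarrow$ spectral'' is likewise clean: if $E$ tiles with $T$ (so $|T|=p$), translate $\textbf{0}\in T$ and fix $\textbf{t}\in T\setminus\{\textbf{0}\}$; no nonzero $\textbf{a}\in\textbf{t}^{\perp}$ can satisfy $\widehat{1_T}(\textbf{a})=0$ (that would make $\langle\textbf{a},\cdot\rangle$ a bijection on $T$, impossible since $\langle\textbf{a},\textbf{0}\rangle=\langle\textbf{a},\textbf{t}\rangle=0$), so $\textbf{t}^{\perp}\setminus\{\textbf{0}\}\subseteq Z(E)$ and, as $|\textbf{t}^{\perp}|=p^{d-1}=|E|$, the subspace $\textbf{t}^{\perp}$ is a spectrum of $E$.

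What is left is the genuinely hard input: the cardinality bound in (1), and the reverse implication of (3) for $|E|=p^{d-1}$ --- which, by (4), asks for a spectrum of $E$ lying inside a proper subspace and is automatic unless both $E$ and every one of its spectra span $\ZpZ{d}$. Both hinge on the inductive lemma used above: that the slices $E_j$ of a spectral set along a balanced direction are spectral in dimension $d-1$. This is where I expect the real difficulty. The obvious candidate spectrum for $E_0$ is the image of $A$ under the coordinate projection, but one only sees immediately that it \emph{spans} $L^2(E_0)$; it is orthogonal exactly when $A$ is a union of cosets of $\langle\textbf{e}_1\rangle$, and although each translate $A+k\textbf{e}_1$ is again a spectrum of $E$, forcing a genuinely $\langle\textbf{e}_1\rangle$-invariant spectrum is the delicate step. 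An alternative worth pursuing is the ``log-Hadamard matrix'' picture: a spectral pair of size $n$ is an $n\times n$ matrix over $\Z/p\Z$ of the special form $\textbf{x}\mapsto\langle\textbf{x},\textbf{a}\rangle$ all of whose nonzero row- and column-differences are balanced, and one would try to prove directly that such a matrix of $\Z/p\Z$-rank $r$ with pairwise distinct rows has either $p^r$ rows or at most $p^{r-1}$, trading the slicing argument for a cardinality estimate on balanced matrices.
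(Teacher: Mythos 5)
Your Fourier reformulations are sound, and with them you correctly dispose of (4), of (2) (modulo the small point that you should first translate $A$ so that $\textbf{0}\in A$, so that the two independent vectors you pick are themselves differences of elements of $A$ and hence genuinely lie in $Z(E)=\{\textbf{v}:\widehat{1_E}(\textbf{v})=0\}$), of the divisibility statement in (1), and of three of the four implications in (3). However, two items of the theorem are not proved: the upper bound $|E|\le p^{d-1}$ in (1) when $|E|\ne p^d$, and the implication ``spectral implies tiling'' in (3) when $|E|=p^{d-1}$. You acknowledge this yourself: your proposed induction needs the lemma that the slices of a spectral set along a balanced direction are again spectral in $\ZpZ{d-1}$, and you give no proof of it (nor is it clear how to force an $\langle\textbf{e}_1\rangle$-invariant spectrum, as you note). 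So the write-up is incomplete exactly at the two substantive points, and the route you sketch for them is harder than necessary.

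Both missing pieces follow from one elementary counting observation, with no induction or slicing: if a set $S\subseteq\ZpZ{d}$ misses a direction $\textbf{v}$ (no two of its points differ by a nonzero multiple of $\textbf{v}$), then $S$ meets each of the $p^{d-1}$ cosets of $\langle\textbf{v}\rangle$ at most once, so $|S|\le p^{d-1}$. For (1): if $E$ is spectral with spectrum $A$ and $|E|=|A|>p^{d-1}$, then $A$ determines every direction, so $(A-A)\setminus\{\textbf{0}\}\subseteq Z(E)$ contains a point on every line through the origin; since you already showed $Z(E)$ is closed under nonzero scalar multiples, $Z(E)=\ZpZ{d}\setminus\{\textbf{0}\}$, hence $1_E$ is constant and $E=\ZpZ{d}$, i.e. $|E|=p^d$. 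For (3) with $|E|=p^{d-1}$: the $|E|\times|E|$ matrix $\bigl(\chi_{\textbf{a}}(\textbf{e})\bigr)_{\textbf{a}\in A,\textbf{e}\in E}$ has orthogonal rows and unimodular entries, hence orthogonal columns, which says exactly that $(E-E)\setminus\{\textbf{0}\}\subseteq Z(A)$, i.e. $E$ is a spectrum of $A$. If $E$ determined every direction, the same scaling argument would force $A=\ZpZ{d}$, contradicting $|A|=p^{d-1}$; so $E$ misses some direction $\textbf{w}$, and then $|E|=p^{d-1}$ makes $E$ an exact transversal of the cosets of $\langle\textbf{w}\rangle$, so $E$ tiles with the line $\langle\textbf{w}\rangle$ as partner. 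These two short arguments close the gap and replace the slicing lemma entirely; the rest of your proof can stand as written.
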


\newpage

\begin{prop}
	Let $E \subseteq (\Z/2\Z)^4$, then $E$ is a tiling set if and only if $E$ is a spectral set.
\end{prop}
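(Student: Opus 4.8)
The plan is to classify $E$ by its cardinality and feed each case into Theorem~\ref{mainthmresearchpaper}. First I would pin down the admissible sizes. If $E$ tiles, then $|E|$ divides $2^4=16$, so $|E|\in\{1,2,4,8,16\}$. If $E$ is spectral, part (1) of Theorem~\ref{mainthmresearchpaper} forces $|E|\in\{1,16\}$ or $|E|=2k$ with $1\le k\le 2^{2}=4$, and then part (2) discards $|E|=6$; again $|E|\in\{1,2,4,8,16\}$. Consequently, whenever $|E|\notin\{1,2,4,8,16\}$ both properties fail and the equivalence is vacuously true.

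For the surviving sizes I would argue as follows. If $|E|\in\{1,16\}$ then $E$ is a singleton or the whole group, and both are simultaneously tiling and spectral, as observed above. If $|E|\in\{2,8\}=\{2,2^{3}\}$, part (3) of Theorem~\ref{mainthmresearchpaper} gives the equivalence at once. The only remaining case is $|E|=4$, which I would handle by a short direct analysis.

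For $|E|=4$ I would first note that both \emph{tiling} and \emph{spectral} are preserved by the affine group $(\Z/2\Z)^4\rtimes\mathrm{GL}_4(\Z/2\Z)$: a translation changes an inner product $\langle\chi_{\textbf{a}},\chi_{\textbf{b}}\rangle_{L^2(E)}$ only by the unit factor $\chi_{\textbf{a}}(\textbf{t})\overline{\chi_{\textbf{b}}(\textbf{t})}$, and a linear automorphism $\phi$ carries a tiling partner of $E$ to one of $\phi(E)$ and a spectrum of $E$ to a spectrum of $\phi(E)$ (via the inverse transpose of $\phi$). Hence it suffices to treat one $4$-subset from each affine orbit. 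Translating a point of $E$ to $\textbf{0}$ we may write $E=\{\textbf{0},\textbf{u},\textbf{v},\textbf{w}\}$; since the only possible linear relation over $\Z/2\Z$ among three distinct nonzero vectors is $\textbf{u}+\textbf{v}+\textbf{w}=\textbf{0}$, either $\{\textbf{u},\textbf{v},\textbf{w}\}$ is linearly independent, or $\textbf{w}=\textbf{u}+\textbf{v}$ with $\textbf{u},\textbf{v}$ independent; this gives exactly two orbits, that of $E_1=\{\textbf{0},\textbf{e}_1,\textbf{e}_2,\textbf{e}_3\}$ and that of a $2$-dimensional subspace $U$. A $2$-dimensional subspace $U$ tiles with any complementary $2$-dimensional subspace $W$, and by part (4) of Theorem~\ref{mainthmresearchpaper} tiling with the subspace partner $W$ is the same as being spectral with spectrum $W^{\perp}$, so $U$ is both. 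For $E_1$ I would check by hand that the sixteen vectors $\textbf{e}_i+\textbf{x}$ (with $\textbf{e}_0=\textbf{0}$, $0\le i\le 3$, $\textbf{x}\in W$) are pairwise distinct for the $2$-dimensional subspace $W$ spanned by $\textbf{e}_1+\textbf{e}_2+\textbf{e}_3$ and $\textbf{e}_4$, so that $E_1$ tiles with subspace partner $W$; part (4) then yields that $E_1$ is spectral (with spectrum $W^{\perp}$, the span of $\textbf{e}_1+\textbf{e}_2$ and $\textbf{e}_1+\textbf{e}_3$). As every $4$-subset is therefore both tiling and spectral, the equivalence holds for $|E|=4$ as well, and the proposition follows.

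No step here is deep; the only place demanding care is the $|E|=4$ analysis — correctly enumerating the two affine orbits of $4$-subsets and producing an explicit subspace tiling partner for the linearly independent representative, so that part (4) of Theorem~\ref{mainthmresearchpaper} does the rest. Everything else is bookkeeping with the four parts of that theorem.
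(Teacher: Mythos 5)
Your proposal is correct and follows essentially the same route as the paper: reduce to sizes $\{2,4,8\}$ via parts (1)--(2), dispatch $|E|\in\{2,8\}$ by part (3), and for $|E|=4$ split into the plane case and the non-planar case normalized to $\{\textbf{0},\textbf{e}_1,\textbf{e}_2,\textbf{e}_3\}$, which you tile by the very same subspace $\mathrm{span}\{\textbf{e}_1+\textbf{e}_2+\textbf{e}_3,\textbf{e}_4\}$ used in the paper, concluding by part (4). The only cosmetic difference is that you verify affine invariance of tiling and spectrality directly, where the paper cites Corollaries 3.2 and 4.3 of the reference.
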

\begin{proof}
	In fact, we will show that any set satisfying the necessary size restrictions is simultaneously tiling and spectral. Both for tiling sets and spectral sets, it follows from the restrictions in the theorem above that for non-trivial $E$, we have $|E| \in \{2,4,8\}$. In all cases except for $|E|=4$, the equivalence follows from (3), so suppose for the remainder of the proof that $|E|=4$.
	
	If $E$ is the set of four points of a two-dimensional plane, then $E$ clearly tiles the space with a subspace tiling partner. By part (4) of the theorem above, this means that $E$ is also spectral.
	
	Therefore, suppose that $E$ is not contained in a plane. By Corollaries 3.2 and 4.3 from \cite{Researchpaper}, the property of being a tiling set or spectral set remains unchanged after a change of basis. Therefore, after a transformation of the coordinates, we can suppose that $E = \{(0,0,0,0),(1,0,0,0),(0,1,0,0),(0,0,1,0)\}$. This set tiles with tiling partner $\{(0,0,0,0),(0,0,0,1),(1,1,1,0),(1,1,1,1)\}$, which turns out to be a subspace. By part (4), this implies that $E$ is also spectral. 		
\end{proof}

\section{The counterexample}

We return to the situation when $p$ is an odd prime. In this section we will construct a spectral set $E\subseteq \ZpZ{4}$ of size $2p$. As $2p$ does not divide $p^4$, this implies that $E$ cannot be tiling.

Let $p$ be an odd prime and fix a non-square $n \in \Fp$. Define the sets $E$ and $A$ in $\ZpZ{4}$ by

\[E = \left\{(t^2,t,t,1) \,\, | \,\, t \in \Fp\right\} \cup \left\{(nt^2,nt,t,n) \,\, | \,\, t \in \Fp\right\}\]
and
\[\hspace{.6cm}A = \left\{(1,2i,0,i^2) \,\, | \,\, i \in \Fp\right\} \cup \left\{(0,0,-2ni,ni^2	) \,\, | \,\, i \in \Fp\right\}.\]

Remark that when $p \equiv 3 \pmod{4}$ we can choose $n = -1$, which corresponds, up to invertible linear transformation, to the counterexample constructed in \cite{Researchpaper}. 

First, we reformulate the orthogonality condition for any $\textbf{a},\textbf{b} \in A$ 

\begin{align*}
0 &= \langle \chi_\textbf{a}, \chi_{\textbf{b}} \rangle_{L^2(E)} = \sum_{\textbf{e} \in E}\chi_\textbf{a}(\textbf{e})\overline{\chi_{\textbf{b}}(\textbf{e})} = \sum_{\textbf{e} \in E}\chi_{(\textbf{a}-\textbf{b})}(\textbf{e}) 
\end{align*}

The second ingredient is the following well-known lemma on the vanishing of $p$-th roots of unity for which we give a short proof.

\begin{lemma}\label{vanishing roots}
	Let $\xi = e^{\frac{2\pi i}{p}}$, then 
	$\sum_{i = 0}^{p-1}c_i\xi^i = 0, c_i \in \Q \Leftrightarrow \text{all $c_i$ are equal}$.
\end{lemma}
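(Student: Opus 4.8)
The plan is to prove the equivalence by reducing it to a statement about the minimal polynomial of $\xi = e^{2\pi i/p}$ over $\Q$. Recall that this minimal polynomial is the $p$-th cyclotomic polynomial $\Phi_p(X) = 1 + X + X^2 + \dots + X^{p-1}$, which has degree $p-1$ and is irreducible over $\Q$. The direction ``$\Leftarrow$'' is immediate: if all $c_i$ are equal to some common value $c$, then $\sum_{i=0}^{p-1} c_i \xi^i = c \sum_{i=0}^{p-1} \xi^i = c \cdot \Phi_p(\xi) = 0$, since $\xi$ is a root of $\Phi_p$ (alternatively, this is just the sum of all $p$-th roots of unity, which vanishes).

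For the direction ``$\Rightarrow$'', suppose $\sum_{i=0}^{p-1} c_i \xi^i = 0$ with $c_i \in \Q$. Then the polynomial $g(X) = \sum_{i=0}^{p-1} c_i X^i \in \Q[X]$ has $\xi$ as a root and has degree at most $p-1$. Since $\Phi_p$ is the minimal polynomial of $\xi$ and has degree exactly $p-1$, it must divide $g$; comparing degrees forces $g = \lambda \Phi_p$ for some $\lambda \in \Q$. Reading off coefficients gives $c_i = \lambda$ for every $i$, so all the $c_i$ are equal.

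If one prefers to avoid invoking irreducibility of $\Phi_p$ as a black box, an elementary alternative is available: from $\sum c_i \xi^i = 0$ and the relation $\xi^{p-1} = -(1 + \xi + \dots + \xi^{p-2})$, substitute to obtain a $\Q$-linear relation $\sum_{i=0}^{p-2}(c_i - c_{p-1})\xi^i = 0$ among the powers $1, \xi, \dots, \xi^{p-2}$; since $[\Q(\xi):\Q] = p-1$ these powers are $\Q$-linearly independent, forcing $c_i = c_{p-1}$ for all $i$. Either way, the single genuinely non-trivial input is that $[\Q(\xi):\Q] = p-1$, equivalently the irreducibility of $\Phi_p$ — this is the ``hard part,'' though it is entirely standard (Eisenstein at $p$ applied to $\Phi_p(X+1)$) and the paper is clearly entitled to quote it as well-known. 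Everything else is bookkeeping with polynomial division and degree counts, so I would keep the write-up to a few lines and cite a standard reference such as \cite{Conrad} for the irreducibility statement.
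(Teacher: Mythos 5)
Your proposal is correct and follows essentially the same route as the paper: both rest on the irreducibility of $X^{p-1}+\dots+X+1$ over $\Q$ (Eisenstein after $X\mapsto X+1$), with your write-up merely making explicit the divisibility/degree-count step that the paper's terser proof leaves implicit. No gaps.
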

\begin{proof}
	The $p$-th roots of unity are exactly $\{1, \xi,\dots,\xi^{p-1}\}$, the roots of $X^p-1 \in \Q[X]$. As this polynomial factors over $\Q[X]$ into the irreducible factors $X-1$ and $X^{p-1}+\dots+X+1$ by Eisenstein's criterion (after the transformation $X \rightarrow X+1$), this directly implies that the only possibility for a sum of $p$-th roots to be zero is if all coefficients are equal.
\end{proof}

Using this lemma we can obtain a geometrical interpretation of spectral sets. When $|E| > 1$, a spectral set $E$ with spectrum $A$ is equidistributed along the hyperplanes of any parallel class defined by the linear equations
\[\langle \textbf{a}-\textbf{b}, \textbf{e} \rangle = c, \,\,c \in \Fp,\]
where $\textbf{a}$ and $\textbf{b}$ are fixed and distinct elements of $A$.
This implies in general, that a spectral set is a singleton or has size divisible by $p$, which is part of Theorem \ref{mainthmresearchpaper} (1).

We will now show that $\{\chi_\textbf{a} \,\, | \,\, \textbf{a} \in A\}$ is indeed an orthogonal set of functions in $L^2(E)$. As also $|E|=|A|$, this then implies that $A$ is a spectrum for $E$. 

We claim that for any $c \in \Fp$, the number of solutions in $\textbf{e}$ to the equation $\langle \textbf{a}-\textbf{b},\textbf{e} \rangle = c$ always equals two. If this holds, then the equalities

\[\sum_{\textbf{e} \in E}\chi_{(\textbf{a}-\textbf{b})}(\textbf{e}) = \sum_{\textbf{e} \in E} e^{\frac{2\pi i}{p}\langle \textbf{a}-\textbf{b},\textbf{e} \rangle} = \sum_{m=0}^{p-1} 2e^{\frac{2\pi i}{p}m} = 0,\]

applying Lemma \ref{vanishing roots} in the last equality, show that $\{\chi_\textbf{a} \,\, | \,\, \textbf{a} \in A\}$ is indeed an orthogonal set in $L^2(E)$. As $A$ is defined as the union of two one-parameter sets, the vector $\textbf{a}-\textbf{b}$ is one of the following expressions:

\[\textbf{a}-\textbf{b} = \begin{cases}
	(0,2(i-j),0,i^2-j^2) & (i \neq j) \\[2pt]
	(0,0,-2n(i-j),n(i^2-j^2)) & (i \neq j) \\[2pt]
	\pm(1,2i,2nj,i^2-nj^2) 
\end{cases} \hspace{10pt} \text{ where } i,j \in \Fp.\]

For each of the cases, we will show that $\langle \textbf{a}-\textbf{b},\textbf{e} \rangle = c$ has two solutions in $\textbf{e}$.

\begin{enumerate}
	\item {$\textbf{a}-\textbf{b} = (0,2(i-j),0,i^2-j^2)$ where $i,j \in \Fp, i \neq j$}. \\
	The equation $\langle \textbf{a}-\textbf{b}, \textbf{e} \rangle = c$, can be rewritten as
	\begin{align*}
	2(i-j)t+(i^2-j^2)&=c \\
	2(i-j)nt+n(i^2-j^2)&=c. 
	\end{align*}
	As $i \neq j$, it is clear that each equation has exactly one solution in $t$ and so there are two solutions for $t$, and hence for $\textbf{e}$, in total.
	
	\item {$\textbf{a}-\textbf{b} = (0,0,-2n(i-j),n(i^2-j^2))$ where $i,j \in \Fp, i \neq j$} \\
	This case is similar to the previous one as the resulting equations are
	\begin{align*}
	-2n(i-j)t+n(i^2-j^2)&=c \\
	-2n(i-j)t+n^2(i^2-j^2)&=c. 
	\end{align*}
	
	\item {$\textbf{a}-\textbf{b} = \pm(1,2i,2nj,i^2-nj^2)$ where $i,j \in \Fp$} \\
	Now we find the union of two quadratic equations $Q_1(t)$ and $Q_2(t)$:
	\begin{align*}
	t^2+(2i+2nj)t+(i^2-nj^2)&=\pm c \\
	nt^2+(2ni+2nj)t+(ni^2-n^2j^2)&=\pm c. 
	\end{align*}
	Denoting the discriminants by $D_1$ and $D_2$ respectively, one can compute that 
	\[nD_1 = D_2 = 4n(2nij+(n^2+n)j^2\pm c) \]
	Recalling the fact that $n$ is a non-square in $\Fp$, this means that $Q_1(t)=0$ has two solutions if and only if $Q_2(t)=0$ has zero and vice versa, and if one has a unique solution, then the other does too. Collecting everything, this means we can always find two solutions again, which concludes the proof.  \hfill \qed
\end{enumerate}

\nocite{*}

\bibliographystyle{hplain.bst}	
\bibliography{bibliography}	

\end{document}